\newtheorem{theorem}{Theorem}[section]
\newtheorem*{faketheorem}{Theorem 1.1}
\newtheorem*{theorem*}{Theorem}
\newtheorem*{problem}{Problem}
\newtheorem*{lemma}{Lemma}
\newtheorem{proposition}{Proposition}
\theoremstyle{definition}
\newtheorem{example}[theorem]{Example}
\theoremstyle{remark}
\numberwithin{equation}{section}
\renewcommand{\b}{\beta}
\def\s{\sigma}
\def\R{\mathbb{R}}
\def\P{\mathbb{P}}
\def\Z{\mathbb{Z}}
\def\D{\Delta}
\def\Pnk{\tilde\Pi_{\le n}^k}
\newcommand{\nf}[2]{\text{\Large $\nicefrac{#1}{#2}$}}
\newcommand{\Stir}[2]{\genfrac{\{}{\}}{0pt}{}{#1}{#2}}
\begin{document}

\title{Discretized configurations and partial partitions}


\author{Aaron Abrams}
\address{Mathematical Sciences Research Institute \\
17 Gauss Way \\
Berkeley CA 94720}
\curraddr{}
\email{abrams.aaron@gmail.com}
\thanks{}

\author{David Gay}
\address{Department of Mathematics \\
University of Georgia \\
Athens GA 30602 }
\curraddr{}
\email{d.gay@euclidlab.org}
\thanks{}

\author{Valerie Hower}
\address{Department of Mathematics \\
University of California, Berkeley \\
Berkeley CA 94720}
\email{vhower@math.berkeley.edu}

\subjclass[2010]{55R80,05A18,11B73}

\date{September 15, 2010}

\dedicatory{}

\commby{Ken Ono}

\begin{abstract} 
We show that the discretized configuration space of $k$ points in the
$n$-simplex is homotopy equivalent to a wedge of spheres of dimension
$n-k+1$.  This space is homeomorphic to the order complex of the poset of 
ordered partial partitions of $\{1,\dots,n+1\}$ with exactly $k$ parts.  We
compute the exponential generating function for the Euler characteristic of this space
in two different ways, thereby obtaining
a topological proof of a combinatorial recurrence satisfied by the Stirling numbers
of the second kind.
\end{abstract}

\maketitle


\section{Introduction}


\subsection{Configurations}
The discretized configuration space $D_k(X)$ was introduced in \cite{thesis} as 
a combinatorial model of the classical configuration space of $k$-tuples of distinct
points in a space $X$.  (When $k=2$ this has classically been called the ``deleted product''.)
To define $D_k(X)$ it is required that $X$ have the structure of a cell complex.
In \cite{thesis}, and in the works of several subsequent authors, the space $X$ is a graph, 
i.e.~a finite 1-complex.

To study $D_k(X)$ for higher dimensional $X$, it is natural to begin with some basic
building blocks.  In this paper we consider the discretized configuration spaces $D_k(\D^n)$, 
where $\D^n$ is the $n$-dimensional simplex.  Note that if $X$ is any simplicial complex,
then $D_k(X)$ is built out of (products of) spaces of the form $D_i(\D^n)$.
We prove
\begin{theorem}\label{main} 
The space $D_k(\D^n)$ is homotopy equivalent to a wedge of spheres of dimension $n-k+1$.
\end{theorem}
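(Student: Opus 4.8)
The plan is to work entirely with the combinatorial model supplied by the identification stated in the abstract: $D_k(\D^n)$ is a regular CW complex whose cells are the tuples $(B_1,\dots,B_k)$ of pairwise disjoint nonempty subsets of $\{1,\dots,n+1\}$, ordered by blockwise inclusion, and whose order complex realizes the space up to homeomorphism. Equivalently, a cell is a partial coloring $f\colon\{1,\dots,n+1\}\to\{1,\dots,k\}\cup\{\ast\}$ that is surjective onto $\{1,\dots,k\}$, of dimension $\#f^{-1}(\{1,\dots,k\})-k$. First I would record two structural facts. The complex is \emph{pure} of dimension $n-k+1$: a cell is maximal exactly when no vertex is left uncolored, i.e.\ when $f$ is a genuine surjection (an ordered set partition of all of $\{1,\dots,n+1\}$ into $k$ blocks), and every such cell has dimension $(n+1)-k$. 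Moreover, when $n-k+1\ge 1$ the complex is connected, since any point can be slid, one vertex at a time, to a free vertex.

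The main step is to build an acyclic matching (discrete vector field) on this face poset whose only critical cells sit in dimension $0$ and dimension $n-k+1$, with a single critical $0$-cell, namely $(\{1\},\dots,\{k\})$. Given such a matching, the discrete Morse theorem collapses $D_k(\D^n)$ onto a CW complex with one $0$-cell and one $(n-k+1)$-cell per critical facet; since there are no cells in intermediate dimensions, each top cell is attached to the single $0$-cell (its entire lower skeleton), so every attaching map is null and the result is automatically a wedge of spheres of dimension $n-k+1$. For $k=1$ there are no critical facets and the space is contractible, i.e.\ an empty wedge; the degenerate case $k=n+1$, where the space is a finite set of points, I would treat directly. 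The number of spheres is then the number of critical facets, which I would cross-check against the Euler characteristic computed later in the paper.

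To construct the matching I would pivot on a distinguished vertex and toggle whether it is colored, scanning $\{1,\dots,n+1\}$ in order and using the linear order to assign a canonical block whenever a vertex is recolored, so that the operation is a reversible involution on all but the intended critical cells; this is essentially a lexicographic shelling of the facets phrased as a Morse matching. \textbf{The main obstacle} is exactly this construction: a naive ``add or remove a vertex'' rule fails, because deleting a vertex from a block forgets which block it belonged to, so the canonical reinsertion rule must recover that block from the order alone, and one must then prove that the induced vector field has no closed gradient paths (acyclicity) and that its critical cells are confined to dimensions $0$ and $n-k+1$. Establishing acyclicity together with the exact location of the critical cells—equivalently, exhibiting a valid shelling order on the ordered set partitions—is the heart of the argument; once it is in hand, the wedge-of-spheres conclusion and the count of spheres follow formally.
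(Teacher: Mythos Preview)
Your proposal is a plan, not a proof, and the gap you yourself flag is the entire content of the argument. A discrete Morse matching with critical cells only in dimensions $0$ and $n-k+1$ would indeed deliver the theorem immediately, but you do not construct one: you sketch a ``toggle the first uncolored vertex'' heuristic, then correctly observe that it fails as stated because removing a vertex forgets its block, and you leave the repaired rule, its acyclicity, and the location of its critical cells all unproven. That is precisely the work. In fact the paper explicitly records this as unresolved: the authors state that they do \emph{not} give a shelling of $D_k(\D^n)$ and pose ``Find a combinatorial proof of Theorem~\ref{main}'' as an open problem. So your strategy is not an alternative proof but a restatement of that problem.

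The paper's route is entirely different and avoids combinatorics on the face poset. It handles the low-dimensional cases directly ($n-k+1=0$ is a finite set, $n-k+1=1$ is a connected graph), and for $n-k+1\ge 2$ it proves two independent propositions: first, that $D_k(\D^n)$ is simply connected, by induction on $k$ using the projection $\rho$ onto the first coordinate and a Seifert--van~Kampen argument over a spanning tree of the $1$-skeleton of $\D^n$; second, that $D_k(\D^n)$ has the homology of a wedge of $(n-k+1)$-spheres, by a spectral sequence associated to the filtration $X_p=\rho^{-1}(\D^n_{\le p})$, again by induction on $k$. The Whitehead theorem then finishes. If you want to carry out your approach, you need to actually exhibit the matching (or an EL-/CL-labelling of $\tilde\Pi^k_{\le n+1}$) and verify it; absent that, what you have written does not establish the theorem.
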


\begin{theorem}\label{euler}
The number of spheres in the wedge is given by the formula
$$\b_{k,n}=\sum\limits_{i=0}^{k}(-1)^{i+k+1} \binom {k}{i+1} i^{n+1}.$$
More concisely, the Euler characteristic $\chi_{k,n}$ of $D_k(\D^n)$ has the
two-variable exponential generating function
given by
$$\sum\limits_{k,n} \chi_{k,n-1} \frac {x^k}{k!} \frac{y^n}{n!} =  e^{x+y-xe^{-y}}.$$
\end{theorem}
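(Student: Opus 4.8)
The plan is to reduce everything to a single Euler-characteristic computation and then read off both formulas. The starting point is Theorem~\ref{main}: since $D_k(\D^n)$ is homotopy equivalent to a wedge of $\b_{k,n}$ spheres, all of dimension $n-k+1$, its reduced homology is concentrated in that one degree and is free of rank $\b_{k,n}$. Hence the reduced Euler characteristic is $(-1)^{n-k+1}\b_{k,n}$, so that $\chi_{k,n}=1+(-1)^{n-k+1}\b_{k,n}$, equivalently $\b_{k,n}=(-1)^{n-k+1}(\chi_{k,n}-1)$. Thus it suffices to compute $\chi_{k,n}$, after which the number of spheres is forced; no separate argument for $\b_{k,n}$ is needed beyond the homotopy type already in hand.

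Next I would compute $\chi_{k,n}$ directly from the cell structure. A cell of $D_k(\D^n)$ is an ordered $k$-tuple $(S_1,\dots,S_k)$ of pairwise disjoint nonempty subsets of the vertex set $\{1,\dots,n+1\}$, i.e.\ an ordered partial partition with $k$ parts, and its dimension is $\sum_i(|S_i|-1)=\bigl(\sum_i|S_i|\bigr)-k$. Writing $a_i=|S_i|\ge 1$ and letting $r=n+1-\sum_i a_i\ge 0$ count the unused vertices, the number of cells with prescribed block sizes is the multinomial $\binom{n+1}{a_1,\dots,a_k,r}$, so that $\chi_{k,n}=(-1)^k\sum(-1)^{\sum_i a_i}\binom{n+1}{a_1,\dots,a_k,r}$, the sum over all admissible size vectors.

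I would then package this as an exponential generating function. Each block is an exponential structure carrying weight $(-1)^a$ on an underlying set of size $a\ge1$, contributing the factor $e^{-z}-1$; the $k$ ordered blocks contribute $(e^{-z}-1)^k$, and the unused vertices contribute $e^{z}$. Reading off the coefficient of $z^{n+1}/(n+1)!$ gives $\chi_{k,n}=(-1)^k(n+1)!\,[z^{n+1}](e^{-z}-1)^k e^z$, hence $\sum_N \chi_{k,N-1}\,z^N/N!=(1-e^{-z})^k e^z$. Summing over $k$ against $x^k/k!$ collapses the $k$-sum into an exponential, and setting $z=y$ yields $e^{y}e^{x(1-e^{-y})}=e^{x+y-xe^{-y}}$, the claimed generating function. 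Finally, expanding $(e^{-z}-1)^k e^z$ by the binomial theorem and extracting $[z^{n+1}]$ from each $e^{(1-i)z}$ gives the closed form $\chi_{k,n}=\sum_{i=0}^{k}(-1)^i\binom{k}{i}(1-i)^{n+1}$; substituting this into $\b_{k,n}=(-1)^{n-k+1}(\chi_{k,n}-1)$ and reindexing $j=i-1$ (the $i=0$ term producing the $-1$, the $i=1$ term vanishing) reproduces the stated formula for $\b_{k,n}$.

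The routine pieces are the multinomial bookkeeping and the binomial extraction; the one step deserving care is the generating-function setup, in particular remembering the $e^{z}$ factor for the unused vertices and tracking the signs $(-1)^k$ and $(-1)^{\sum_i a_i}$ consistently, since a sign slip there would corrupt both formulas simultaneously. I would also verify the boundary cases $k>n+1$ (where $D_k(\D^n)$ is empty) and $n-k+1=0$ (a wedge of $0$-spheres, where $\chi=1+\b_{k,n}$) to confirm that the homology-to-Euler-characteristic conversion holds uniformly. The promised second evaluation of the generating function is not logically required to prove the theorem as stated; rather, it is the comparison of the present computation with the classical identity $\Stir{n}{k}=\tfrac1{k!}\sum_i(-1)^{k-i}\binom{k}{i}i^{n}$ that furnishes the advertised topological derivation of the Stirling recurrence.
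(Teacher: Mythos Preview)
Your argument is correct and follows the same overall plan as the paper: compute $\chi_{k,n}$ by counting cells of $D_k(\D^n)$, package the count as an exponential generating function that collapses to $e^y(1-e^{-y})^k$, sum over $k$, and then read off $\b_{k,n}$ via $\chi_{k,n}=1+(-1)^{n-k+1}\b_{k,n}$.

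The execution differs in one respect worth noting. The paper routes the cell count through the $S_k$-quotient $UD_k(\D^n)$ and Stirling numbers, obtaining $\chi_{k,n}=k!\sum_i(-1)^i\binom{n+1}{k+i}\Stir{k+i}{k}$, and then invokes the known Stirling EGF $\sum_n\Stir{n}{k}z^n/n!=(e^z-1)^k/k!$ to reach $(e^{-y}-1)^ke^y$. You instead work directly with ordered partial partitions via the multinomial count and the species factorization $(e^{-z}-1)^k\cdot e^z$, which is a cleaner path to the same intermediate object. A concrete payoff is your derivation of the closed form for $\b_{k,n}$: the paper asserts that its Stirling expression equals $\sum_j(-1)^{n+j-1}\binom{k}{j}(j-1)^{n+1}$ by an unproved ``elementary combinatorial'' identity, whereas in your setup the binomial expansion of $(e^{-z}-1)^ke^z$ yields $\chi_{k,n}=\sum_i(-1)^i\binom{k}{i}(1-i)^{n+1}$ immediately, and the reindexing $j=i-1$ gives the stated $\b_{k,n}$ with no outside input. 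Conversely, the paper's detour is what makes the Stirling connection explicit, which is part of the point of the paper.
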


For example, $D_2(\D^n)$ is homotopy equivalent (actually homeomorphic; see
Section \ref{sec:defs}) to an $(n-1)$-dimensional sphere, and $D_3(\D^n)$ is homotopy
equivalent to a wedge of $2^{n+1}-3$ spheres of dimension $n-2$.

Note that, by Theorem \ref{main}, the Betti number $\b_{k,n}$ is related to the
Euler characteristic by 
\begin{equation}\label{bettieuler}
\chi_{k,n}=1+(-1)^{n-k+1}\b_{k,n}.
\end{equation}
Thus combinatorial manipulations are sufficient to prove that the two statements
in Theorem \ref{euler} are equivalent.


\subsection{Partitions}
The partition lattice $\Pi_n$ is a classical combinatorial object studied since antiquity.  
It is a poset whose elements are set partitions of $[n]=\{1,\ldots,n\}$ and whose ordering
is given by refinement.  The order complex of $\Pi_n$ has the homotopy type of a wedge of 
spheres (see \cite{wachs}).  The (larger) lattice $\Pi_{\le n}$ of \emph{partial partitions} has 
recently also been shown to have the homotopy type of a wedge of spheres \cite{tricia}.

These results are proved using the machinery of algebraic combinatorics.
The standard technique is to find a shelling of the order complex, often via an 
EL- or CL-labelling of the poset \cite{wachs}.  One nice feature of these arguments
is that they often produce an explicit basis for the (unique) nonzero homology group, 
and one can usually compute its Betti number.


\subsection{Connection} \label{connection}
The face poset of the space $D_k(\D^{n-1})$ can be identified with a
combinatorial object closely related to $\Pi_{\le n}$, namely the poset $\Pnk$ 
of \emph{ordered partial partitions of $[n]$ with exactly $k$ parts}.  Thus 
Theorems \ref{main} and \ref{euler} imply the following.

\begin{theorem}\label{poset}
The poset $\Pnk$ has the homotopy type of a wedge of spheres of dimension $n-k$.
With notation from Theorem \ref{main}, the number of spheres is $\b_{k,n-1}$ and the 
Euler characteristic is $\chi_{k,n-1}$.
\end{theorem}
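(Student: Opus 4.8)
The plan is to establish a homeomorphism (or at least a homotopy equivalence) between the order complex of $\Pnk$ and the discretized configuration space $D_k(\D^{n-1})$, and then invoke Theorems \ref{main} and \ref{euler} to transfer the conclusions. The key bookkeeping observation, already flagged in Section \ref{connection}, is that the face poset of the cell complex $D_k(\D^{n-1})$ is naturally isomorphic to $\Pnk$. The first step, therefore, is to make this identification precise: I would describe a cell of $D_k(\D^{n-1})$ as a choice, for each of the $k$ points, of a face of $\D^{n-1}$ they occupy, subject to the discretization/disjointness conditions, and check that recording which of the labels $\{1,\dots,n\}$ (the vertices of $\D^{n-1}$) are ``used'' by each point together with the ordering of the $k$ points yields exactly an ordered partial partition of $[n]$ with $k$ parts. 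The face relation in $D_k$ (a cell is a face of another when one shrinks point-occupancies to subfaces) should then correspond under this dictionary to the refinement order on ordered partial partitions, so that the two posets are isomorphic.

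Once that isomorphism is in hand, the standard fact that the order complex of the face poset of a regular CW complex is homeomorphic to the barycentric subdivision of that complex (hence homeomorphic to the underlying space) does most of the work. Concretely, I would argue that $\|\Pnk\| = \|\mathcal{F}(D_k(\D^{n-1}))\|$ is homeomorphic to $D_k(\D^{n-1})$, provided $D_k(\D^{n-1})$ is a regular CW (indeed simplicial/polyhedral) complex; the definitions in Section \ref{sec:defs} should supply this regularity. This reduces the homotopy statement about $\Pnk$ entirely to the already-proven Theorem \ref{main}.

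For the numerical claims I would just track the dimension shift introduced by the substitution $n \mapsto n-1$. Theorem \ref{main} gives that $D_k(\D^{n-1})$ is a wedge of spheres of dimension $(n-1)-k+1 = n-k$, which is precisely the asserted dimension for $\Pnk$. The number of spheres is the corresponding Betti number, which by the definition in Theorem \ref{euler} is $\b_{k,(n-1)} = \b_{k,n-1}$, and the Euler characteristic is likewise $\chi_{k,n-1}$ by the relation \eqref{bettieuler}. So the count and the Euler characteristic follow immediately with no new computation.

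The main obstacle, and the only place requiring genuine care, is verifying that the combinatorial dictionary between cells of $D_k(\D^{n-1})$ and ordered partial partitions is both a bijection \emph{and} order-preserving in the correct direction, including getting the conventions right about which partial order on $\Pnk$ (refinement versus its opposite) matches the face order and hence which end of the poset contributes to the order complex. A subtle point is ensuring $D_k(\D^{n-1})$ really is regular as a CW complex so that the order-complex-equals-barycentric-subdivision theorem applies; if regularity fails one would instead need a direct nerve- or discrete-Morse-theoretic comparison, but I expect the simplicial structure from Section \ref{sec:defs} to make regularity automatic, so that the identification of face posets is the crux.
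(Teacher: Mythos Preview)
Your proposal is correct and follows essentially the same route as the paper: identify the face poset of $D_k(\D^{n-1})$ with $\Pnk$ via $\sigma_1\times\cdots\times\sigma_k \mapsto (S_i)$ where $S_i$ is the vertex set of $\sigma_i$, deduce that the order complex of $\Pnk$ is homeomorphic to $D_k(\D^{n-1})$, and then read off the homotopy type and numerical invariants from Theorems~\ref{main} and~\ref{euler} under the shift $n\mapsto n-1$. The regularity concern you raise is harmless here, since $D_k(\D^{n-1})$ is a subcomplex of the polytopal complex $(\D^{n-1})^k$ and is therefore a regular CW complex.
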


Note that the symmetric group $S_k$ acts 
on the space $D_k(X)$ (and on the poset $\Pnk$) by permuting coordinates.  The 
quotient space $UD_k(X)$ has a topological interpretation as an 
\emph{unordered discretized configuration space}.  The
quotient poset $\Pi_{\le n}^k$ is naturally a subposet of $\Pi_{\le n}$; its elements are the
\emph{partial partitions of $[n]$ with exactly $k$ parts}.  However, $\Pi_{\le n}^k$
does not have the homotopy type of a wedge of spheres, as can be seen
in the case $k=2$ (where one obtains a real projective plane $\R\P^{n-2}$). 

Several other authors have studied the topology of the partition lattice and 
numerous related posets \cite{Hanlon81, Hanlon03,  Kozlov00, Sundaram94, Sundaram96, 
Sundaram01, SundaramWachs94}.  Many (but not all) of these spaces have the homotopy 
type of a wedge of spheres.  These authors also study the actions of the symmetric group 
on the homology of the posets.  We do not attempt this in the present paper but believe that 
a careful analysis of the symmetric group action on $H_{n-k+1}(D_k(\Delta^n))$ would be 
interesting. 


\subsection{Paper contents}
We start with definitions, examples, and results in Section \ref{sec:defs}.
In Section \ref{sec:posetpf} we show that Theorem \ref{poset} is equivalent
to Theorems \ref{main} and \ref{euler}.
The proof of Theorem \ref{main} is a direct computation using algebraic 
topology.  It takes two steps:  in Section \ref{sec:pi_1}, we give an inductive argument that 
the spaces are simply
connected (when the dimension is at least 2), and then in Section \ref{sec:homology}
a spectral sequence computation shows that the spaces have the same 
homology as a wedge of spheres.  Together, these allow us to use the Whitehead 
theorem to deduce the result.  

In particular, we do not give a shelling of the spaces $D_k(\D^n)$, although we
suspect that one may be possible.

\begin{problem}
Find a combinatorial proof of Theorem \ref{main}.
\end{problem}

In Section \ref{sec:euler} we prove Theorem \ref{euler}
by using the interpretation as a partition lattice to count the $i$-dimensional
cells in $D_k(\D^n)$.  This count involves the Stirling numbers of the second kind, 
denoted $$\Stir N K$$
which (by definition) means the number of partitions of a set of size $N$ into exactly
$K$ nonempty subsets.  The theorem is proved using well-known and elementary facts
about Stirling numbers.

We also obtain a recurrence for the top Betti number by following through
the spectral sequence.  One can prove that the expression for $\b_{k,n}$ in Theorem \ref{euler}
agrees with the actual Betti numbers for small $k$ and satisfies the same recurrence;
this establishes (the first half of) Theorem \ref{euler} in a different way.


\section{Definitions, theorem, examples}\label{sec:defs}

\subsection{Configurations}

Let $k,n\ge 1$ be fixed integers. The $n$-simplex $\D^n$ is the largest simplicial 
complex on the vertex set $[n+1]$.  The space $D_k(\D^n)$ is the largest cell complex 
that is contained in the product $(\D^n)^k$ minus its \emph{diagonal} 
$\{(x_1,\ldots,x_k)\in(\D^n)^k \mid x_i=x_j \mbox{ for some } i\ne j\}$.
One can also describe this as the union of those open cells of $(\D^n)^k$
whose closure misses the diagonal; explicitly,
$$D_k(\D^n)=\bigcup\limits_{
\genfrac{}{}{0pt}{2} 
{\bar\s_i \mbox{ {\scriptsize{pairwise disjoint}}} } 
{\mbox{{\scriptsize{closed cells in }}} \D^n} 
}
\s_1\times\cdots\times\s_k.$$
We visualize this space by imagining $k$ ``robots'' in the space $\D^n$; 
then $D_k(\D^n)$ is the space of allowable configurations if a robot is said to 
``occupy'' the entire closure of the cell in whose interior it is contained, and 
a configuration is ``allowable'' if no two robots occupy the same point of $\D^n$.
The maximum dimension of a cell of $D_k(\D^n)$ is $n-k+1$.

\begin{faketheorem}[again]
For all $n\ge 1$ and $2\le k \le n+1$, the space $D_k(\D^n)$ is homotopy 
equivalent to a wedge of spheres of dimension $n-k+1$.
\end{faketheorem}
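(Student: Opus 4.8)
The plan is to prove that $D_k(\Delta^n)$ is homotopy equivalent to a wedge of $(n-k+1)$-spheres by combining a simple-connectivity argument with a homology computation, and then invoking the Whitehead theorem.
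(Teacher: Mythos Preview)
Your outline is exactly the strategy the paper uses: Proposition~\ref{pi_1} establishes simple connectivity, Proposition~\ref{homology} computes the homology via a spectral sequence, and then the Whitehead theorem finishes the argument. One small point to be aware of is that the cases $n-k+1=0$ and $n-k+1=1$ must be treated separately (as the paper does), since in those dimensions the space is not simply connected and the Whitehead argument does not apply directly.
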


\begin{proof}
If $n-k+1=0$, then $D_k(\D^n)$ is a discrete set of $k!\geq 2$ points, hence 
a wedge of 0-spheres.  If $n-k+1=1$, then $D_k(\D^n)$ is a connected 1-complex, hence up
to homotopy, a wedge of circles.  If $n-k+1>1$, then $2\le k < n$, so Proposition 
\ref{pi_1} (Section \ref{sec:pi_1}) shows that $D_k(\D^n)$ is simply connected and
Proposition \ref{homology} (Section \ref{sec:homology}) shows that $D_k(\D^n)$ 
has the homology of a wedge 
of spheres of dimension $n-k+1$.  As there is clearly a map from a wedge
of spheres inducing isomorphisms on homology, the Whitehead theorem
implies the result.
\end{proof}


\subsection{Examples}

Let $K_d$ denote the complete graph on the vertex set $[d]$ (i.e., the 1-skeleton 
of $\D^{d-1}$).  Let $S^d$ denote the $d$-sphere.  
Let $e_i$ denote the $i$th standard basis vector of $\R^d$, for $1\le i\le d$.

\begin{example}[$k=2$]\label{ex:k=2}
We view $\D^n$ as the convex hull of the $n+1$ standard basis vectors $e_i$
in $\R^{n+1}$.  Then the \emph{Gauss map} from $D_2(\D^n)$ to $\R^{n+1}$
given by $(x,y)\mapsto x-y$ is a homeomorphism onto $S$, the boundary of the
polytope with vertex set $\{e_i-e_j \, | \, i,j\in[n+1], i\ne j\}$ in $\R^{n+1}$.  To see this,
note that the map is obviously continuous and surjective, and given a point in $S$
one can determine the coordinates $x$ and $y$ of the preimage as the ``positive'' 
and ``negative'' parts of $S$.  Thus the map is a bijection.  

The complex $S$ is contained in the hyperplane $\{\sum x_i=0\}\cong\R^n$ and 
is homeomorphic to $S^{n-1}$.  This agrees with Theorems \ref{main} and \ref{euler}.
Note also that $D_2(\D^{n-1})$ is contained in 
$D_2(\D^n)$ as an equator.
\end{example}

We next describe some small cases.  

\subsubsection*{$n=2$}
The space $D_2(\D^2)$ is a connected 1-complex with six vertices, each of 
valence two, and six edges. It is a hexagon.  It is the same as $D_2(K_3)$, 
since neither robot may venture into the interior of the 2-cell of $\D^2$.

\subsubsection*{$n=3$}
The space $D_2(\D^3)$ is (the surface of) a cuboctahedron (Figure \ref{cuboct}).  
Deleting the triangular 2-cells leaves the planar 2-complex $D_2(K_4)$.

\begin{figure}[ht]
\begin{center}
\scalebox{.18}{
	\includegraphics{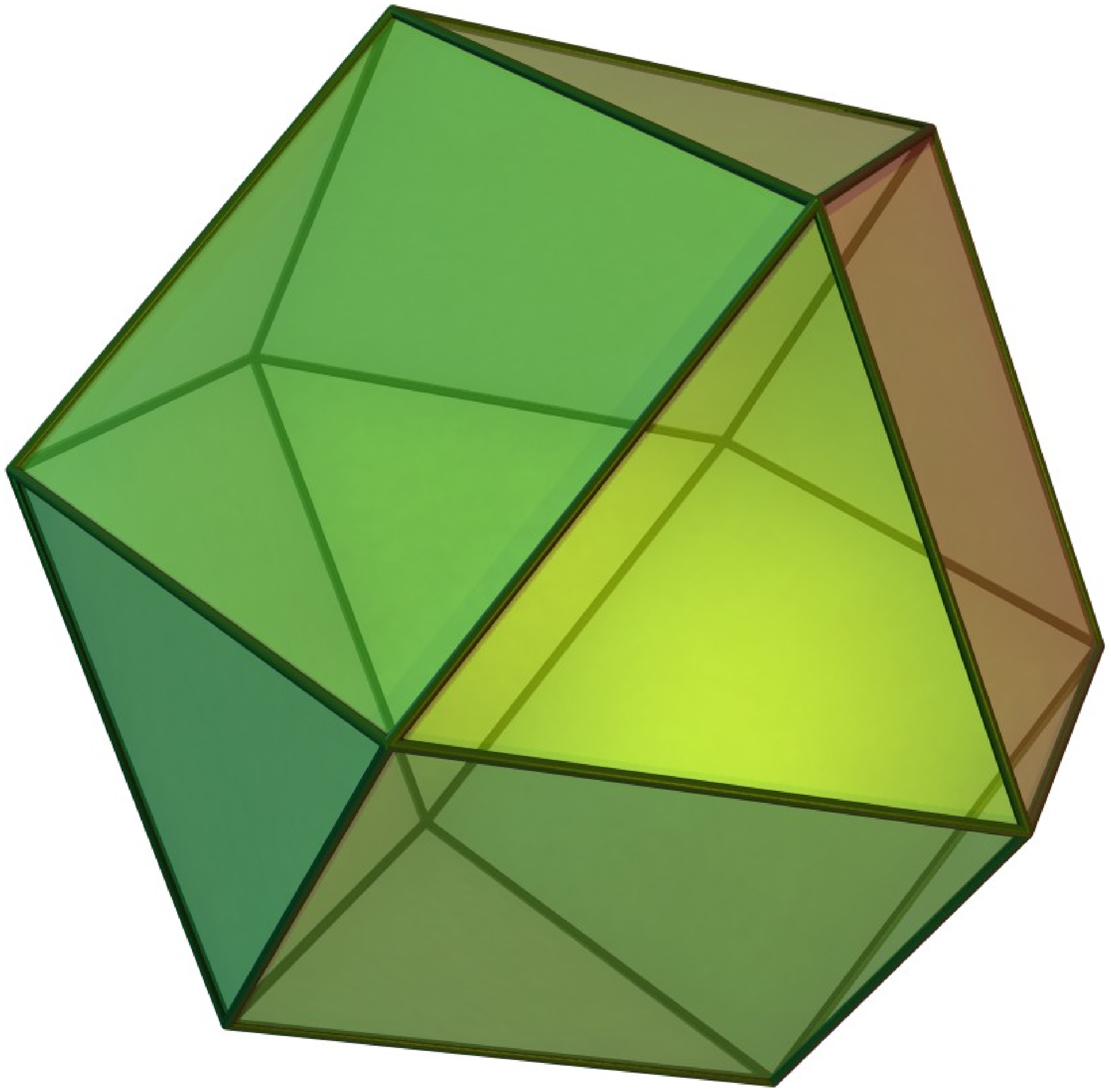}
}
\caption{$D_2(\D^3)$.}
\label{cuboct}
\end{center}
\end{figure}

\subsubsection*{$n=4$}
The space $D_2(\D^4)$ is homeomorphic to $S^3$.  The space $D_2(K_5)$ is a
subcomplex which is itself homeomorphic to a closed orientable surface
of genus six.  This surface is a Heegaard splitting of $D_2(\D^4)$.  The
complementary handlebodies are made of prisms ($\D^2\times I$ and
$I\times\D^2$) and 3-simplices ($v \times\D^3$ and $\D^3\times v$).

\begin{example}[$k=n$]
In this case $D_k(\D^n)$ is a connected 1-complex; a simple count reveals that the
rank of $H_1$ is $\frac 1 2 (n-2)(n+1)! +1$.  It is true but not obvious that this agrees with 
Theorem \ref{euler}.  If one quotients by the action of the
symmetric group $S_k$ that permutes coordinates, the result is the graph $K_{n+1}$
(whose first Betti number is $\binom n 2$).
\end{example}


\subsection{Partitions}\label{sec:posetpf}

An \emph{ordered partition} $\pi$ of the set $[n]=\{1,\ldots,n\}$ is an $r$-tuple (for
some $r$) of disjoint nonempty sets $\pi=(S_1,\ldots,S_r)$ whose union is $[n]$.
(The ordering is on the set of parts, but each part is an unordered set.)
An \emph{ordered partial partition} of $[n]$ is the same, except the union 
is only required to be a subset of $[n]$.  An \emph{ordered partial partition
with exactly $k$ parts} is an ordered partial partition with $r=k$.

Let $\Pnk$ be the poset whose elements are the ordered partial partitions
of $[n]$ with exactly $k$ parts.  The partial order is given by 
$(S_1,\ldots,S_k)\le(T_1,\ldots,T_k)$ iff $S_i\subseteq T_i$ for each $i$.

Note that $\Pnk$ is not a lattice:  writing $(S_1,\ldots,S_k)$ as $(S_i)$, 
the meet $(S_i)\wedge(T_i)$ is $(S_i\cap T_i)$ if
all these sets are nonempty, but it is otherwise nonexistent; and similarly the join 
$(S_i)\vee(T_i)$ is $(S_i\cup T_i)$ if all these sets are disjoint, but otherwise it 
does not exist.  Of course we may add a top and bottom
element if we like.  For comparison, the poset of partial partitions $\Pi_{\le n}$ 
(see \cite{tricia}) has a top element consisting of the 1-part partition $[n]$,
and it is a lattice provided one includes an empty partition at the bottom.

\begin{proof}[Proof of Theorem \ref{poset}]
The face poset of $D_k(\D^{n-1})$ is isomorphic to the poset $\Pnk$,
by mapping the face $\s_1\times\cdots\times\s_k$ to the element
$(S_i)$, where $S_i$ is the set of vertices of the cell $\s_i$.
Thus the order complex of $\Pnk$ is homeomorphic to $D_k(\D^{n-1})$,
so this is equivalent to Theorems \ref{main} and \ref{euler}.
\end{proof}

We remark once again that the quotient $\Pi_{\le n}^k$ of $\Pnk$ by the symmetric 
group $S_k$ does not have the homotopy type of a wedge of spheres.  When $k=2$,
for example, the action is antipodal and the quotient is a projective plane.  Nevertheless
$\Pi_{\le n}^k$ is a subposet of $\Pi_{\le n}$, which is a wedge of spheres up to homotopy
(see the introduction).


\section{The fundamental group}\label{sec:pi_1}

This is the first of the propositions referred to in the proof of Theorem \ref{main}.

\begin{proposition}\label{pi_1}
If $1\leq k < n$, then $D_k(\D^n)$ is simply connected.
\end{proposition}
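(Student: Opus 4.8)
The plan is to induct on $n$, proving that $D_k(\D^n)$ is simply connected for every $k$ with $1\le k<n$. The base case $k=1$ is immediate, since $D_1(\D^n)=\D^n$ is contractible; in particular this settles $n=2$. For the inductive step I would fix $n\ge 3$ and $k$ with $2\le k<n$, single out the vertex $n+1$ of $\D^n$, and write $\D^{n-1}$ for the opposite face spanned by $[n]$. For a configuration $(x_1,\dots,x_k)\in D_k(\D^n)$ let $t_i$ denote the barycentric coordinate of $x_i$ at the vertex $n+1$; since the supports of the $x_i$ are pairwise disjoint, at most one $t_i$ is positive. I would then cover $D_k(\D^n)$ by the open set $V=\{t_j<1\text{ for all }j\}$ (no robot sits exactly at the apex $e_{n+1}$) together with, for each $i$, the open set $U_i=\{t_i>\tfrac12\}$ (robot $i$ is close to the apex). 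These cover, since a configuration outside $V$ has some robot at $e_{n+1}$ and hence $t_i=1$, and the $U_i$ are pairwise disjoint because two robots cannot simultaneously have the apex in their support.

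Next I would identify the homotopy type of each piece by a radial deformation toward or away from $e_{n+1}$. Pushing every robot directly away from the apex onto $\D^{n-1}$ deletes $n+1$ from whichever support contains it (this is well defined on $V$ precisely because no robot sits at the apex) and exhibits a deformation retraction of $V$ onto the copy of $D_k(\D^{n-1})$ carried by the opposite face. Dually, sliding robot $i$ toward the apex while fixing the others retracts $U_i$ onto the subspace where robot $i$ is pinned at $e_{n+1}$ and the remaining $k-1$ robots range over $\D^{n-1}$; this subspace is a copy of $D_{k-1}(\D^{n-1})$, which is simply connected by the inductive hypothesis since $k-1<n-1$ (and when $k-1=1$ it is the contractible $\D^{n-1}$). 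The same two retractions restrict to the intersection $U_i\cap V$, where robot $i$ occupies a cell containing $n+1$ but is not at the apex; in this range its support has at least two elements, so pushing it down lands it in the interior of a genuine face cell.

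Finally I would feed this into van Kampen's theorem. As the $U_i$ are pairwise disjoint, only the intersections $U_i\cap V$ contribute relations, so, using $\pi_1(U_i)=1$,
$$\pi_1\bigl(D_k(\D^n)\bigr)=\pi_1(V)\big/\big\langle\!\big\langle\textstyle\bigcup_i\operatorname{im}\bigl(\pi_1(U_i\cap V)\to\pi_1(V)\bigr)\big\rangle\!\big\rangle.$$
It therefore suffices to show that for a single $i$ the map $\pi_1(U_i\cap V)\to\pi_1(V)$ is surjective: given a loop in $V$, I would first homotope it, inside $V$, down to a loop $\bar\gamma$ in $D_k(\D^{n-1})$, then lift $\bar\gamma$ into $U_i\cap V$ by raising robot $i$ into a cell containing $n+1$, and observe that lowering robot $i$ again is a homotopy in $V$ from this lift back to $\bar\gamma$. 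The main obstacle is exactly this comparison: verifying that the raising and lowering of robot $i$ through the apex are continuous deformations compatible with the disjointness constraint, so that the inclusion $U_i\cap V\hookrightarrow V$ is $\pi_1$-surjective (in fact a homotopy equivalence). Conceptually the point is clean --- a single extra vertex gives one robot enough room to escape upward and contract any loop confined to the face --- and once it is established the displayed quotient is trivial, while adjoining the remaining simply connected, pairwise disjoint $U_j$ along connected intersections leaves $\pi_1$ trivial.
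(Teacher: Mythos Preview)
Your argument is correct and genuinely different from the paper's.  The paper inducts on $k$: it projects onto the first coordinate $\rho:D_k(\Delta^n)\to\Delta^n$, builds $\rho^{-1}(T)$ over a spanning tree $T$ of $K_{n+1}$ (simply connected by Seifert--van~Kampen and the inductive hypothesis on the fibers $D_{k-1}(\Delta^{n-1})$), identifies $\pi_1(\rho^{-1}(K_{n+1}))$ as free of rank $\binom{n}{2}$ with explicit triangle generators for robot~1, and then kills each generator with a visible $2$-simplex.  You instead induct on $n$ and decompose by proximity to a chosen apex, which is closer in spirit to a Mayer--Vietoris/link-of-a-vertex argument.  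The paper's route yields explicit loop generators and dovetails with the filtration used later for the homology spectral sequence; your route is more symmetric in the robots and makes the key step purely homotopy-theoretic.

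Two small points worth tightening.  First, your ``main obstacle'' is not really an obstacle: the map
\[
(1/2,1)\times D_k(\Delta^{n-1})\;\xrightarrow{\ \cong\ }\;U_i\cap V,\qquad
(t,(\bar x_1,\dots,\bar x_k))\longmapsto(\bar x_1,\dots,(1-t)\bar x_i+t\,e_{n+1},\dots,\bar x_k)
\]
is a homeomorphism, and composing the inclusion $U_i\cap V\hookrightarrow V$ with the radial retraction $V\to D_k(\Delta^{n-1})$ gives back the projection onto the second factor.  Hence $U_i\cap V\hookrightarrow V$ is a homotopy equivalence, and in particular $\pi_1$-surjective, with no ad~hoc lifting needed.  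Second, your displayed van~Kampen formula cannot be applied in one shot, since the $U_i$ are pairwise disjoint and share no common basepoint; but your final sentence already indicates the fix: apply van~Kampen to $V\cup U_1$ (basepoint in $U_1\cap V$) to get $\pi_1(V\cup U_1)=1$, and then adjoin the remaining simply connected $U_j$ one at a time along the connected sets $U_j\cap V$.
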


\begin{proof}
If $k=1$, then $D_k(\D^n)=\D^n$, which is simply connected.
If $k=2$ we have already seen that $D_2(\D^n)$ is homeomorphic to $S^{n-1}$, which
is simply connected if $n>2$.  We proceed by
induction on $k$; let $k>2$ be fixed.  Note that the hypothesis means that if
all robots are at vertices, then there are at least two unoccupied vertices.

To prove the theorem we will construct a set of generators of $\pi_1(D_k(\D^n))$
and then we will show that each is null-homotopic.

Since $D_k(\Delta^n)\subset (\Delta^n)^k$, projection onto the first factor induces 
a map $\rho :D_k(\Delta^n)\longrightarrow \Delta^n$.
The inverse image of a point in the interior of an $i$-cell of $\D^n$ is isomorphic
to $D_{k-1}(\D^{n-i-1})$.  In particular, if $v$ is a vertex of $\D^n$, then $\rho^{-1}(v)$
is simply connected, by induction.


Let $v$ be the vertex $n+1$ of $\D^n$ and let $T$ be the spanning tree of the
1-skeleton of $\D^n$ (that is, $K_{n+1}$) consisting
of all edges incident with $v$.  The space $\rho^{-1}(T)$ is the union of $n+1$
\emph{vertex spaces} (i.e.~the preimages of the vertices), each of which is  
a copy of the simply connected space $D_{k-1}(\D^{n-1})$, and $n$ \emph{edge
spaces} (preimages of edges), each of which is a copy of the connected
(but not necessarily simply connected) space $I\times D_{k-1}(\D^{n-2})$.
The edge spaces are attached to the vertex spaces by embeddings at
the ends $\{0,1\}\times D_{k-1}(\D^{n-2})$.  Thus by the Seifert-van Kampen
theorem, $\rho^{-1}(T)$ is simply connected.

Now consider $Y=\rho^{-1}(K_{n+1})$.  The space $Y$ is obtained from 
$\rho^{-1}(T)$ by attaching $\binom n 2$ edge spaces $I\times D_{k-1}(\D^{n-2})$
indexed by the pairs $i,j\in[n]$ with $i<j$.
As there are no new vertex spaces, each such edge space results in an HNN
extension of the fundamental group; thus $\pi_1(Y)$ is free of rank $\binom n 2$.

Note that the entire 1-skeleton of $D_k(\D^n)$ is contained in $Y$.
Thus a generating set for $\pi_1(Y)$ will also generate $\pi_1(D_k(\D^n))$.
We now describe such a generating set.

Fix a basepoint $\star\in \rho^{-1}(v)$.  For each $i,j\in [n]$ choose a path $\alpha_{ij}$ in 
$\rho^{-1}(v)$ from $\star$ to a configuration $x$ with $i$ and $j$ unoccupied and 
each robot at a vertex of $\D^n$.  Let $\gamma_{ij}$ be the 
loop starting at $x$ that leaves all robots fixed except the first and moves the first robot 
around the triangle $v\to i\to j\to v$.  The loop $\alpha_{ij}\gamma_{ij}\alpha_{ij}^{-1}$ 
represents the generator of $\pi_1(Y)$ arising from attaching the edge space 
$\rho^{-1}([i,j])$.  Letting $i,j$ vary, these $\binom n 2$ loops form a free basis 
for $\pi_1(Y)$.

But clearly each of these generators of $\pi_1(Y)$ is null-homotopic in 
$D_k(\D^n)$, as the loop $\gamma_{ij}$ bounds a 2-simplex in $D_k(\D^n)$.
We conclude that $D_k(\D^n)$ is simply connected, as desired.
\end{proof}


\section{Homology} \label{sec:homology}

Here we prove the second of the propositions referred to in the proof of Theorem \ref{main},
using a spectral sequence to compute the homology of $D_k(\D^n)$.  We refer the reader to \cite{Kozlov01} for a discussion on the use of spectral sequences in combinatorics.  In what follows, all homology groups will have integer coefficients.

Recall or observe:
\begin{enumerate}
\item $D_k(\Delta^n)$ has dimension $n-k+1$. 
\item If $n-k+1>0$, then $D_k(\Delta^n)$ is connected.\label{conn}
\item $D_2(\Delta^n)$ is homeomorphic to a sphere of dimension $n-1$. \label{k=2}
\item $D_k(\Delta^{k-1})$ is $k!$ points.
\end{enumerate}

Let $n\geq1$ and $1\leq k \leq n$ be fixed.

Again we consider the projection $\rho :D_k(\Delta^n)\longrightarrow \Delta^n$ onto
the first coordinate.  Note that $\rho$ satisfies
\begin{eqnarray*}
\rho^{-1}([i_1])&\cong &D_{k-1}(\Delta^{n-1}), \\
\rho^{-1}([i_1,i_2])&\cong &\R\times D_{k-1}(\Delta^{n-2}),  \\
\rho^{-1}([i_1,i_2,i_3])&\cong &\R^2\times D_{k-1}(\Delta^{n-3}),\\
& \vdots &\\
\rho^{-1}([i_1,i_2,i_3, \cdots , i_{n-k+1}])&\cong &\R^{n-k}\times D_{k-1}(\Delta^{k-1}), \\
\rho^{-1}([i_1,i_2,i_3, \cdots , i_{n-k+2}])&\cong &\R^{n-k+1}\times D_{k-1}(\Delta^{k-2}), \\
\end{eqnarray*}
where $i_1,i_2,i_3, \ldots , i_{n-k+2}$ are distinct vertices of $\Delta^n$ and the 
face $[\cdot ]$ of $\Delta^n$ is the interior of the convex hull of 
the given vertices.  We use $\Delta^n(k)$ to denote the $k$-dimensional faces of 
$\Delta^n$ and $\Delta^n_{\leq k}=\bigcup_{i\leq k}\Delta^n(i)$ for the $k$-skeleton 
of $\Delta^n$.  

The map $\rho$ gives a filtration of $D_k(\Delta^n)$ as follows:
$$\emptyset=X_{-1}\subset X_0 \subset X_1 \subset X_2 \subset \cdots 
\subset X_{n-k}\subset X_{n-k+1}=D_k(\Delta^n),$$ 
where $X_p=\rho^{-1}(\Delta^n_{\leq p})$.  Moreover 
\begin{eqnarray*} 
X_p\backslash X_{p-1}&=&\bigsqcup_{f\in \Delta^n(p)}\rho^{-1}(f) \\
& \cong &\bigsqcup_{f\in \Delta^n(p)} \R^p\times D_{k-1}(\Delta^{n-p-1}).   
\end{eqnarray*}
We can hence construct a spectral sequence \cite[p.~327]{M} $(E^r, d^r)$ with 
$$E^1_{p,q}=H_{p+q}(X_p\backslash X_{p-1}) \Longrightarrow H_{p+q}(D_k(\Delta^n))$$ 
converging to homology with closed supports.  Since $H_p(\R^p)=\Z$ 
is the only nonzero homology group of $\R^p$, we have
\begin{eqnarray*}
E^1_{p,q}&=&H_{p+q}(X_p\backslash X_{p-1}) \\
&=&\bigoplus_{f\in \Delta^n(p)}H_{p+q}(\R^p\times D_{k-1}(\Delta^{n-p-1})) \\ 
&=& \bigoplus_{f\in \Delta^n(p)}H_q(D_{k-1}(\Delta^{n-p-1})),
\end{eqnarray*}
where we have used the K\"{u}nneth formula.

\begin{proposition} \label{homology}
Let $n\ge 1$ and $1\le k \le n$.  Then 
\[
H_r(D_k(\Delta^n))
=
\begin{cases}
\Z,& \quad \text{if $r=0$}, \\
0,& \quad \text{if $0<r<n-k+1$},
\end{cases}
\]
and $H_{n-k+1}(D_k(\Delta^n))$ is free abelian and nontrivial.
Thus $D_k(\D^n)$ has the same homology as a wedge
of spheres of dimension $n-k+1$.
\end{proposition}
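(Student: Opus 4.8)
The plan is to compute the homology of $D_k(\Delta^n)$ by induction on $k$, using the spectral sequence already set up above together with the four base facts recalled before the proposition. The inductive hypothesis is precisely the statement of Proposition \ref{homology} with $k$ replaced by $k-1$: namely that $D_{k-1}(\Delta^m)$ has the homology of a point in degrees below $m-k+2$ and free abelian nontrivial homology in the top degree $m-k+2$. The base cases are supplied directly: when $k=1$ the space is the contractible simplex $\Delta^n$, and the geometry of $D_2(\Delta^n)\cong S^{n-1}$ (fact \ref{k=2}) handles $k=2$, so I would take $k\ge 3$ and assume the result for $k-1$.

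First I would substitute the inductive hypothesis into the formula $E^1_{p,q}=\bigoplus_{f\in\Delta^n(p)}H_q(D_{k-1}(\Delta^{n-p-1}))$ to pin down exactly which entries of the $E^1$ page are nonzero. By induction $H_q(D_{k-1}(\Delta^{n-p-1}))$ is nonzero only when $q=0$ (giving $\Z$ for each $p$-face, provided $n-p-1\ge k-1$, i.e. the fiber is nonempty and positive-dimensional) or when $q$ equals the top fiber dimension $(n-p-1)-(k-1)+1=n-p-k+1$. So the entire $E^1$ page is concentrated on two antidiagonals: the row $q=0$, and the locus $p+q=n-k+1$ corresponding to top-dimensional fiber homology. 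The row $q=0$ with the $d^1$ differential is just the simplicial chain complex of the skeleton $\Delta^n_{\le n-k+1}$ (weighted by the number of vertices available), whose homology I can read off since a skeleton of a simplex is highly connected; this should contribute only the $\Z$ in degree $0$. The top antidiagonal $p+q=n-k+1$ contributes to $H_{n-k+1}$.

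The heart of the argument is to show that no nonzero homology survives in the intermediate degrees $0<r<n-k+1$, which amounts to checking that the spectral sequence differentials $d^r$ kill everything off the two relevant spots. Because the nonzero entries sit on only two antidiagonals, the possible differentials are limited: the main interaction is the differential running from the top antidiagonal to the $q=0$ row. I would analyze the $d^1$ (and if necessary higher $d^r$) differentials along the $q=0$ row first, using that $H_*(\Delta^n_{\le p})$ vanishes in positive degrees below the skeletal dimension, to collapse that row to $\Z$ in total degree $0$; then argue that the top-antidiagonal entries either cancel among themselves or map isomorphically onto the surviving portion of the bottom row, leaving free abelian homology only in degree $0$ and degree $n-k+1$. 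Freeness in the top degree follows because $H_{n-k+1}$ is the top nonzero degree (nothing can map into it to create torsion) and it is a subquotient of a free abelian $E^1$ group.

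The hardest part will be controlling the differentials precisely enough to guarantee that the middle degrees vanish rather than merely bounding them — in particular verifying that the edge map from the top antidiagonal onto the $q=0$ row is surjective onto exactly the cokernel that needs to die, and tracking what happens at the boundary faces where the fiber $D_{k-1}(\Delta^{n-p-1})$ degenerates (the cases $n-p-1=k-1$ giving $k!$ points, and $n-p-1=k-2$ giving the empty fiber). I expect these boundary terms to interact with the convention about homology with closed supports noted in the setup, so I would check the indexing there carefully. Nontriviality of the top homology should then follow either from the Euler characteristic computation of Theorem \ref{euler} (which is nonzero) or by exhibiting that the top antidiagonal cannot be entirely cancelled.
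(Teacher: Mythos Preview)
Your approach is the paper's: induct on $k$, feed the inductive hypothesis into the spectral sequence to concentrate $E^1$ on the row $q=0$ and the antidiagonal $p+q=n-k+1$, and identify $(E^1_{\bullet,0},d^1)$ with (a slight enlargement of) the simplicial chain complex of $\Delta^n$.

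There is one misconception, though, which makes you anticipate a ``hardest part'' that does not exist. You speak of differentials ``running from the top antidiagonal to the $q=0$ row'' and of antidiagonal entries needing to ``cancel among themselves or map isomorphically onto the surviving portion of the bottom row''. None of this happens. Every antidiagonal entry has total degree $p+q=n-k+1$, so they all contribute to $H_{n-k+1}$, not to any intermediate degree; there is nothing for them to cancel. And since each $d^r$ lowers total degree by $1$, the target of any differential out of the antidiagonal lies in total degree $n-k$, where the only nonzero $E^1$ group is $E^1_{n-k,0}$; the unique antidiagonal entry that can hit it is $E^1_{n-k+1,0}$ via $d^1$, and that entry already sits on the $q=0$ row. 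For $q\ge 1$ the antidiagonal entries have \emph{zero} differentials in and out for all $r$, hence survive unchanged to $E^\infty$.

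So the entire argument is the row computation you already sketched: for $p\le n-k$ the map $d^1_{p,0}$ is literally the simplicial boundary $\partial_p$ of $\Delta^n$, and $d^1_{n-k+1,0}$ restricted to one chosen generator per top face is $\partial_{n-k+1}$ (the extra $(k-1)!-1$ generators per face only enlarge the kernel, not the image). Acyclicity of $\Delta^n$ gives $E^2_{0,0}\cong\Z$, $E^2_{p,0}=0$ for $0<p\le n-k$, and a free abelian $E^2_{n-k+1,0}$. The sequence degenerates at $E^2$ and the proposition follows; no higher differentials or edge maps need to be analyzed.
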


\begin{proof}
We induct on $k$; the cases $k=1,2$ are observations (\ref{conn}), 
(\ref{k=2}) above.  Assume the theorem holds for configurations of $k-1$ robots.    
Let $(E^r,d^r)$ be the spectral sequence from above.
Using our inductive hypothesis, the $E^1$ term has nonzero entries only along 
the diagonal line $p+q=n-k+1$ and along row $q=0$.   The entries in the $E^1$ 
term are as follows:
$$q \quad \begin{array}{|cccccccc} E^1_{0,n-k+1} \\ 
0& E^1_{1,n-k}\\  
0&0&E^1_{2,n-k-1} \\ 
0&0&0&E^1_{3,n-k-2} \\ 
0 &0&0&0&\ddots\\ 
0&0&0&0&\cdots&\ddots \\
0&0&0&0&\cdots &\cdots &E^1_{n-k,1} \\  
E^1_{0,0} & E^1_{1,0} & E^1_{2,0} & E^1_{3,0} & \cdots & \cdots & E^1_{n-k,0} & E^1_{n-k+1,0} \\ 
\hline \\ 
p=0&1&2&3& \cdots&\cdots &n-k&n-k+1 
\end{array} $$
where 
\begin{align*}
\mathrm{rank }E^1_{p, n-k+1-p}&= \binom{n+1}{p+1} b_{n-k+1-p}(D_{k-1}(\Delta^{n-p-1}))
\mbox{ for }0\leq p \leq n-k, \\
\mathrm{rank }E^1_{p,0}&=
	\begin{cases} 
	\binom{n+1}{p+1} & \mathrm{for}\quad 0\leq p \leq n-k, \\ 
	(k-1)!\binom{n+1}{p+1}& \mathrm{for}\quad p=n-k+1 ,
	\end{cases}
\end{align*}
and all other entries are zero.

The only possible nonzero higher differentials are the horizontal maps 
$d^1_{p,0}:E^1_{p,0}\longrightarrow E^1_{p-1,0}$ for $1 \leq p \leq n-k+1$, 
where
$$d^1_{p,0}:\bigoplus_{f\in\Delta^n(p)} H_0(D_{k-1}(\Delta^{n-p-1})) \longrightarrow 
\bigoplus_{g\in\Delta^n(p-1)} H_0(D_{k-1}(\Delta^{n-p})) $$
is a direct sum of maps 
$$H_0(D_{k-1}(\Delta^{n-p-1}))\longrightarrow 
\bigoplus_{g\in f(p-1)} H_0(D_{k-1}(\Delta^{n-p})) \quad \quad \mbox{for } f\in \Delta^n(p).$$

Note that if $\mathrm{dim}f\leq n-k$ and $g\in f(p-1)$, then the map 
$$\Z\cong H_0(D_{k-1}(\Delta^{n-p-1})) \longrightarrow 
H_0(D_{k-1}(\Delta^{n-p})) \cong \Z$$ 
is injective (hence an isomorphism) as it is induced by inclusion.  Thus, computing 
$E^2_{p,0}=\nf{\mathrm{ker}d^1_{p,0}}{\mathrm{im}d^1_{p+1,0}}$ is equivalent to 
computing the $p$th homology group of $\Delta^n$ for $p <n-k$, and we obtain 
$E^2_{p,0}=0$ for $p <n-k$.

For $p=n-k$, we have that $d^1_{n-k,0}$ is equivalent to $\partial_{n-k}$, where 
$\partial$ is the boundary map for the $n$-simplex.  For each $(n-k+1)$-face 
$f$ of $\Delta^n$, let $\gamma_f$ be a generator for the factor 
of $H_0(D_{k-1}(\Delta^{k-2}))$ corresponding to 
the fiber over $f$.  Then 
$$\left. d^1_{n-k+1,0}\right|_{\bigoplus_{f}\Z\cdot\gamma_f} \cong\partial_{n-k+1}$$ 
and hence 
$\mathrm{im}d^1_{n-k+1,0}\cong\mathrm{im}\partial_{n-k+1}=\mathrm{ker}
\partial_{n-k}\cong\mathrm{ker}d^1_{n-k,0},$ which yields $E^2_{n-k,0}=0$.  
Thus the only nonzero entries of the $E^2$ term are $E^2_{0,0}\cong\Z$ 
and also along the line $p+q=n-k+1$ where we have free abelian groups.  
We hence obtain the integer homology groups of $D_k(\Delta^n)$ from 
$E^2=E^{\infty}$ by adding along the lines $p+q=r$, which proves the inductive step.
\end{proof}


\section{The Euler characteristic}\label{sec:euler}

In this section we discuss two proofs of Theorem \ref{euler}.
We give the first in detail, via the interpretation of $D_k(\D^n)$ as the order complex 
of the poset $\tilde\Pi_{\le n+1}^k$.  The second approach uses the spectral sequence 
computation from Section \ref{sec:homology}; we give an outline and invite the reader
to fill in the details.


\subsection{Stirling numbers}

Recall that the symmetric group $S_k$ acts on $D_k(\D^n)$ by permuting
coordinates.  This is a free cellular action; i.e., the quotient $UD_k(\D^n)$ inherits a
natural cell structure.  An $i$-dimensional cell of $D_k(\D^n)$ corresponds to an ordered 
partial partition of $[n+1]$ which uses exactly $k+i$ of the elements from $[n+1]$, 
and an $i$-dimensional cell of $UD_k(\D^n)$ corresponds to an \emph{unordered} 
partial partition of $[n+1]$ which uses exactly $k+i$ of the elements from $[n+1]$.

The Stirling number of the second kind, denoted $\Stir N K$, is by
definition the number of ways to partition a set of size $N$ into exactly $K$ nonempty 
subsets.  (These partitions are unordered.)  Thus the number of $i$-cells of $UD_k(\D^n)$ is 
$$\binom {n+1}{k+i} \Stir {k+i}{k},$$
and the Euler characteristic of $UD_k(\D^n)$ is 
$$\sum_{i=0}^{n-k+1} (-1)^i \binom {n+1}{k+i} \Stir {k+i}{k}.$$
Note that this sum can be extended to all integer values of $i$, since
the additional terms would all be zero for one reason or another.
As the Euler characteristic is multiplicative under covers, it follows that the Euler
characteristic of $D_k(\D^n)$ is
\begin{equation}\label{first}
\chi_{k,n}=k!\sum_{i} (-1)^i \binom {n+1}{k+i} \Stir {k+i}{k}.
\end{equation}

There are many well-known formulae and recurrences for the Stirling numbers;
see \cite[Sec.~1.6]{gfology} for a nice introduction.  For instance a closed form
is
$$\Stir N K = \frac 1{K!} \sum_{j=0}^K (-1)^{K-j} {\binom K j} j^N.$$

By Equations \eqref{bettieuler} and \eqref{first}, 
the first half of Theorem \ref{euler} follows from the combinatorial identity
$$\sum\limits_{j}(-1)^{n+j-1} \binom {k}{j} (j-1)^{n+1}
=
k!\sum_{i} (-1)^i \binom {n+1}{k+i} \Stir {k+i}{k},$$
which, using the closed form for Stirling numbers, can indeed be proven by 
elementary combinatorial techniques.

\begin{example}[$k=3$]
The theorem implies that $D_3(\D^n)$ is a wedge of
$2^{n+1}-3$ spheres of dimension $n-2$.
\end{example}

\begin{example}[$k=4$]
Similarly, $D_4(\D^n)$ is a wedge of $3^{n+1} - 4 \cdot 2^{n+1} + 6$ spheres
of dimension $n-3$.
\end{example}


\subsection{Generating functions}

The exponential generating function for Stirling numbers is also well-known.
A form that is useful here is that for fixed $k$, 
\begin{equation}\label{second}
\sum\limits_{n=0}^{\infty} \Stir n k \frac {z^n}{n!} = \frac {(e^z -1 )^k}{k!}.
\end{equation}
Plugging in from Equation \eqref{first}, we have
$$\sum\limits_k\sum\limits_n \chi_{k,n-1} \frac {x^k}{k!} \frac {y^n}{n!}
=
\sum\limits_{k,n,i} (-1)^i \binom {n}{k+i} \Stir {k+i}{k}  \frac {x^k y^n}{n!} .$$
Replacing $i$ with the index $j=k+i$ yields
$$\sum\limits_k (-1)^k \left(\sum\limits_{n,j} \binom n j (-1)^j \Stir j k \frac {y^n}{n!} \right) x^k,$$
and we recognize the expression in large parentheses as the product of the 
exponential generating functions for $e^y$ and $\frac 1 {k!}(e^{-y}-1)^k$
(see Equation \eqref{second}).
Thus this sum simplifies to 
$$\sum\limits_k (-1)^k e^y \left(e^{-y}-1\right)^k \frac {x^k}{k!},$$
which in turn equals $e^y e^{x(1-e^{-y})}$.  This establishes 
the second part of Theorem \ref{euler}.

Of course one can differentiate with respect to $y$ to obtain a generating function
with expansion $\sum_{k,n} \chi_{k,n} \frac{x^k}{k!}\frac {y^n}{n!}$.


\subsection{Betti numbers from the spectral sequence}

Another approach to proving the first half of Theorem \ref{euler} is to notice that the
spectral sequence yields a recursion satisfied by the Betti number $\b_{k,n}$.  One
can then show that the formula in the statement of the theorem satisfies the same 
recursion.  We leave the latter part to the reader; again, this can be
carried out by elementary (but nontrivial) combinatorial arguments.

We establish the following recurrence for $\b_{k,n}$ in terms of the values of 
$\b_{k-1,i}$ for $i<n$.

\begin{theorem}\label{recurrence}
For $1\le k \le n+1$, we have 
$$\b_{k,n}=-\binom{n}{k-1}+\sum_{p=0}^{n-k+1} \binom{n+1}{k+p-1}\b_{k-1,k+p-2}.$$
\end{theorem}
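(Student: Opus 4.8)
The plan is to read the recurrence directly off the spectral sequence $(E^r,d^r)$ of Section \ref{sec:homology}, running exactly the argument that proves Proposition \ref{homology} but now tracking ranks rather than just vanishing. Since $D_k(\D^n)$ is a wedge of spheres of dimension $n-k+1$, the number of spheres is $\b_{k,n}=\operatorname{rank}H_{n-k+1}(D_k(\D^n))$, and because $E^2=E^\infty$ this rank is the sum of $\operatorname{rank}E^2_{p,\,n-k+1-p}$ over all filtration levels $p=0,1,\dots,n-k+1$. First I would separate this anti-diagonal into its two kinds of entries. For $0\le p\le n-k$ the entry $E^2_{p,\,n-k+1-p}$ is an undisturbed diagonal term: its horizontal neighbors $E^1_{p\pm1,\,n-k+1-p}$ lie off both the diagonal and the row $q=0$, so no $d^1$ enters or leaves it and it survives unchanged from $E^1$, contributing
$$\operatorname{rank}E^2_{p,\,n-k+1-p}=\binom{n+1}{p+1}\,b_{n-k+1-p}\bigl(D_{k-1}(\D^{n-p-1})\bigr)=\binom{n+1}{p+1}\,\b_{k-1,\,n-p-1},$$
using that $n-k+1-p$ is the top dimension of the fiber $D_{k-1}(\D^{n-p-1})$.

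The only remaining contribution is the surviving bottom-row entry $E^2_{n-k+1,0}=\ker d^1_{n-k+1,0}$ (there is no incoming differential, as the filtration stops at level $n-k+1$), and its rank is the heart of the computation. I would obtain it from $\operatorname{rank}E^2_{n-k+1,0}=\operatorname{rank}E^1_{n-k+1,0}-\operatorname{rank}\operatorname{im}d^1_{n-k+1,0}$. The first term is $(k-1)!\binom{n+1}{k-1}$, since $\D^n$ has $\binom{n+1}{k-1}$ top faces and each fiber $D_{k-1}(\D^{k-2})$ is $(k-1)!$ points. For the image I would use that $E^2_{p,0}$ vanishes for $1\le p\le n-k$ (established in the proof of Proposition \ref{homology}): this forces $\operatorname{im}d^1_{n-k+1,0}=\ker d^1_{n-k,0}$, and since the $q=0$ row below the top level is exactly the simplicial chain complex of $\D^n$, this kernel is the group of $(n-k)$-cycles, hence $(n-k)$-boundaries, of $\D^n$, of rank $\binom{n}{k-1}$. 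Uniformly, and covering the edge case $n=k$, one can instead take the Euler characteristic of the $q=0$ complex, whose homology is $\Z$ in degree $0$ and $E^2_{n-k+1,0}$ on top, reducing the whole step to the identity $\sum_{p=0}^{n-k}(-1)^p\binom{n+1}{p+1}=1-(-1)^{n-k+1}\binom{n}{k-1}$. Either way, $\operatorname{rank}E^2_{n-k+1,0}=(k-1)!\binom{n+1}{k-1}-\binom{n}{k-1}$.

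Combining the two parts yields
$$\b_{k,n}=(k-1)!\binom{n+1}{k-1}-\binom{n}{k-1}+\sum_{p=0}^{n-k}\binom{n+1}{p+1}\,\b_{k-1,\,n-p-1},$$
after which the last step is cosmetic: substituting $p=n-k+1-p'$ rewrites the sum as $\sum_{p'}\binom{n+1}{k+p'-1}\b_{k-1,\,k+p'-2}$, and the leftover term $(k-1)!\binom{n+1}{k-1}$ is precisely the missing $p'=0$ summand once one reads $\b_{k-1,k-2}$ as the rank $(k-1)!$ of $H_0(D_{k-1}(\D^{k-2}))$.

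I expect the main obstacle to be bookkeeping at this bottom corner. The entry $E^1_{n-k+1,0}$ records the \emph{unreduced} $H_0$ of the point-fibers, so the $\b_{k-1,k-2}$ appearing in the $p=0$ term must count all $(k-1)!$ components rather than the reduced $(k-1)!-1$; keeping this convention straight, together with correctly identifying $\operatorname{rank}\operatorname{im}d^1_{n-k+1,0}$ with $\binom{n}{k-1}$ (the source of the $-\binom{n}{k-1}$ term), is the only genuinely delicate point. As a sanity check I would verify the end result in the extreme case $k=2$, where every $\b_{1,p}$ with $p\ge1$ vanishes and the formula collapses to $\b_{2,n}=-n+(n+1)\b_{1,0}=1$, matching $D_2(\D^n)\cong S^{n-1}$; the remaining verification is just the routine binomial identity above.
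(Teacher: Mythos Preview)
Your argument is correct and follows essentially the same route as the paper's: split $\b_{k,n}$ along the anti-diagonal of $E^2$, leave the diagonal entries untouched, and compute $\operatorname{rank}E^2_{n-k+1,0}=(k-1)!\binom{n+1}{k-1}-\binom{n}{k-1}$, then absorb the $(k-1)!\binom{n+1}{k-1}$ into the sum via the convention $\b_{k-1,k-2}=(k-1)!$. The one genuine (minor) difference is in how you obtain $\operatorname{rank}E^2_{n-k+1,0}$: the paper decomposes $E^1_{n-k+1,0}$ by choosing a basepoint $\gamma_f$ in each fiber, peels off $(k-1)!-1$ free summands in the kernel, and then computes $\operatorname{rank}\ker\partial_{n-k+1}$ by an iterative alternating-sum $f_k$; you instead invoke the already-established vanishing $E^2_{n-k,0}=0$ to get $\operatorname{im}d^1_{n-k+1,0}=\ker d^1_{n-k,0}\cong Z_{n-k}(\D^n)$ and apply rank--nullity, or equivalently take the Euler characteristic of the whole $q=0$ row. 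Your shortcut is cleaner and avoids the auxiliary recursion, at the cost of relying on Proposition~\ref{homology} as a black box; the paper's version is more self-contained. Your caveat about the unreduced reading of $\b_{k-1,k-2}$ is exactly the point the paper glosses over with ``because $\b_{j,j-1}=j!$''.
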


It would be nice to have a geometric interpretation of this result.  For instance,
it may be possible to describe bases for the top homology groups in such a way
that geometric relationships between the bases for different values of $k$ and $n$
shed light on this recurrence.

In any case, because $\b_{j,j-1}=j!$, the theorem itself follows immediately from the 
next two lemmas.

\begin{lemma}
For $3\le k \le n$, define $Y_{k,n}:=\mathrm{rank}E^2_{n-k+1,0}$, where $(E^r, d^r)$ 
is the spectral sequence for $D_k(\Delta^n)$.  Then we have 
$$Y_{k,n}=(k-1)! \binom{n+1}{k-1} - \binom{n}{k-1}.$$
\end{lemma}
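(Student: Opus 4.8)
The plan is to extract $Y_{k,n}$ from the bottom row $q=0$ of the spectral sequence by computing the Euler characteristic of that row in two ways, using only facts already established in Section~\ref{sec:homology}. The differentials $d^1_{p,0}\colon E^1_{p,0}\to E^1_{p-1,0}$ assemble into a finite chain complex
$$0\to E^1_{n-k+1,0}\xrightarrow{\,d^1\,}E^1_{n-k,0}\xrightarrow{\,d^1\,}\cdots\xrightarrow{\,d^1\,}E^1_{0,0}\to 0$$
whose homology at position $p$ is precisely $E^2_{p,0}$. By the proof of Proposition~\ref{homology} this homology is $\Z$ at $p=0$, vanishes for $1\le p\le n-k$, and by definition has rank $Y_{k,n}$ at the top position $p=n-k+1$. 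Hence the Euler characteristic of the complex is $1+(-1)^{\,n-k+1}Y_{k,n}$.

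Next I would recompute the same Euler characteristic from the $E^1$-ranks found in Section~\ref{sec:homology}, namely $\mathrm{rank}\,E^1_{p,0}=\binom{n+1}{p+1}$ for $0\le p\le n-k$ and $\mathrm{rank}\,E^1_{n-k+1,0}=(k-1)!\binom{n+1}{n-k+2}=(k-1)!\binom{n+1}{k-1}$. This gives
$$\sum_{p=0}^{n-k}(-1)^p\binom{n+1}{p+1}\;+\;(-1)^{\,n-k+1}(k-1)!\binom{n+1}{k-1}$$
for the Euler characteristic. The alternating binomial sum is dispatched by the standard partial-sum identity $\sum_{m=0}^{M}(-1)^m\binom{N}{m}=(-1)^M\binom{N-1}{M}$: after reindexing $m=p+1$ the first term equals $1-(-1)^{\,n-k+1}\binom{n}{k-1}$.

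Equating the two expressions, the constant $1$ cancels and a common factor $(-1)^{\,n-k+1}$ divides out, leaving $Y_{k,n}=(k-1)!\binom{n+1}{k-1}-\binom{n}{k-1}$. The two summands are geometrically transparent: $(k-1)!\binom{n+1}{k-1}$ counts the generators of the corner $E^1_{n-k+1,0}$, one copy of $H_0(D_{k-1}(\Delta^{k-2}))\cong\Z^{(k-1)!}$ for each of the $\binom{n+1}{k-1}$ top faces of $\Delta^n$, while $\binom{n}{k-1}$ is the rank of $\mathrm{im}\,d^1_{n-k+1,0}$, equivalently the rank of the group of $(n-k)$-cycles of the acyclic complex $\Delta^n$ (using that $d^1_{p,0}$ agrees with the simplicial boundary $\partial_p$ for $p\le n-k$). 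I do not expect any serious obstacle beyond this bookkeeping; the one delicate point is the boundary case $k=n$, where $n-k=0$ and $E^2_{0,0}=\Z$ is nonzero, so the naive rank--nullity route through $\mathrm{im}\,d^1_{n-k+1,0}=\mathrm{ker}\,d^1_{n-k,0}$ is off by one. The Euler-characteristic formulation is preferable precisely because it absorbs this case uniformly.
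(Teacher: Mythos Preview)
Your argument is correct, and it is a genuinely different route from the paper's.

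The paper computes $Y_{k,n}=\mathrm{rank}\,\ker d^1_{n-k+1,0}$ directly: for each top face $f$ it fixes one of the $(k-1)!$ generators $\gamma_f$ of $H_0(D_{k-1}(\Delta^{k-2}))$, observes that the differences among the $(k-1)!$ generators over each $f$ lie in the kernel (contributing rank $\bigl((k-1)!-1\bigr)\binom{n+1}{k-1}$), and identifies the restriction of $d^1_{n-k+1,0}$ to $\bigoplus_f\Z\gamma_f$ with the simplicial boundary $\partial_{n-k+1}$ of $\Delta^n$. It then evaluates $\mathrm{rank}\,\ker\partial_{n-k+1}=\mathrm{rank}\,\mathrm{im}\,\partial_{n-k+2}$ by iterated rank--nullity and the same alternating binomial identity you use. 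Your Euler-characteristic trick on the whole bottom row bypasses all of this structure: once Proposition~\ref{homology} has pinned down $E^2_{p,0}$ for $p\le n-k$, the single unknown rank at $p=n-k+1$ is forced by the alternating sum of the $E^1$-ranks. This is cleaner, handles $k=n$ without special pleading (as you note), and makes it transparent that the lemma is really a repackaging of information already contained in Proposition~\ref{homology}. What the paper's approach buys in exchange is an explicit description of $\ker d^1_{n-k+1,0}$, which could be useful if one later wanted a concrete basis for $E^2_{n-k+1,0}$ or a geometric reading of Theorem~\ref{recurrence}.
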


\begin{proof}
As in Section \ref{sec:homology}, for each $(n-k+1)$-face 
$f$ of $\Delta^n$, let $\gamma_f$ be a generator for the factor 
of $H_0(D_{k-1}(\Delta^{k-2}))$ corresponding to 
the fiber over  $f$.  Then we have the following:
\begin{eqnarray*}
\mathrm{im}d^1_{n-k+1,0}& =&\left. \mathrm{im}d^1_{n-k+1,0}
	\right|_{\bigoplus_{f}\Z\cdot\gamma_f} ,\\
\mathrm{ker}d^1_{n-k+1,0} &=&\underbrace{\Z^{\binom{n+1} {n-k+2}}\oplus \ldots \oplus 
\Z^{\binom{n+1}{n-k+2}} }_{{(k-1)!-1}\mbox{ factors}} \oplus \left. 
	\mathrm{ker}d^1_{n-k+1,0}\right|_{\bigoplus_{f}\Z\cdot\gamma_f} ,
\end{eqnarray*}
where 
$$\left. \mathrm{ker}d^1_{n-k+1,0}\right|_{\bigoplus_{f}\Z\cdot\gamma_f} \cong 
\mathrm{ker}\partial_{n-k+1}= \mathrm{im}\partial_{n-k+2}$$ 
and $\partial$ is the boundary map for the $n$-simplex.
Next, we note that
\begin{eqnarray*}
\mathrm{rank}(\mathrm{im}\partial_{n-k+2}) 
&=&\binom{n+1}{n-k+3} -\mathrm{rank}(\mathrm{ker}\partial_{n-k+2}) \\
&=&\binom{n+1}{n-k+3} -\mathrm{rank}(\mathrm{im}\partial_{n-k+3}).
\end{eqnarray*}
Thus, if we define $f_k:= \mathrm{rank}(\mathrm{im}\partial_{n-k+2})$, then we have 
$f_2=1$ and $f_k = \binom{n+1} {n-k+3} - f_{k-1}$ for $3\le k \le n$.
Iterating, we find that
$f_k=\sum_{j=0}^{k-2}(-1)^{k+j}\binom{n+1} j$.

Thus 
\begin{eqnarray*}
Y_{k,n}&=&\left[(k-1)!-1\right]\binom {n +1} {k-1} + f_k \\
&=&(k-1)!\binom {n +1} {k-1} +\sum_{j=0}^{k-1}(-1)^{k+j}\binom{n+1} j\\
&=&(k-1)!\binom {n +1} {k-1} - \binom {n}{k-1},
\end{eqnarray*}
where in the last line we have used the elementary combinatorial identity
$\sum_{j=0}^K (-1)^j \binom N j = (-1)^K \binom {N-1} K$.
\end{proof}

\begin{lemma}
If $3\le k \le n$, then $\b_{k,n}$ satisfies the following recursion:
$$
\b_{k,n}=Y_{k,n}+ \sum_{p=0}^{n-k}\binom{n+1} {p+1} \b_{k-1,n-p-1},
$$
where $Y_{k,n}$ is as in the above lemma.
\end{lemma}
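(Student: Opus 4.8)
The plan is to read the top Betti number $\b_{k,n}=\mathrm{rank}\,H_{n-k+1}(D_k(\D^n))$ directly off the spectral sequence $(E^r,d^r)$ of Section~\ref{sec:homology}. Proposition~\ref{homology} shows that $E^2=E^\infty$ and that, apart from $E^2_{0,0}\cong\Z$, the only nonzero entries lie on the anti-diagonal $p+q=n-k+1$. Since the abutment is recovered by summing ranks along the lines $p+q=r$, and since $n-k+1\ge 1$ keeps the $E^2_{0,0}$ term in $H_0$ rather than $H_{n-k+1}$, I would write
$$\b_{k,n}=\sum_{p=0}^{n-k+1}\mathrm{rank}\,E^2_{p,\,n-k+1-p}.$$
First I would split off the bottom-row corner $p=n-k+1$, $q=0$: by the definition in the preceding lemma this contributes exactly $Y_{k,n}=\mathrm{rank}\,E^2_{n-k+1,0}$.

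For the remaining terms $0\le p\le n-k$ (where $q=n-k+1-p>0$), I would invoke the observation from Section~\ref{sec:homology} that the only nonzero higher differentials are the horizontal maps $d^1_{p,0}$ along the bottom row. Hence an entry with $q>0$ is neither the source nor the target of any nonzero differential, so it survives unchanged to $E^2$; that is, $E^2_{p,\,n-k+1-p}=E^1_{p,\,n-k+1-p}$ for $0\le p\le n-k$. The rank of this $E^1$ entry was computed in Section~\ref{sec:homology}: it is indexed by the $\binom{n+1}{p+1}$ faces of $\D^n$ of dimension $p$, and by the inductive hypothesis $D_{k-1}(\D^{n-p-1})$ is a wedge of spheres of dimension $(n-p-1)-(k-1)+1=n-k+1-p$ with $\b_{k-1,n-p-1}$ spheres, so $\mathrm{rank}\,E^1_{p,\,n-k+1-p}=\binom{n+1}{p+1}\b_{k-1,n-p-1}$.

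Substituting these two computations into the displayed sum yields
$$\b_{k,n}=Y_{k,n}+\sum_{p=0}^{n-k}\binom{n+1}{p+1}\b_{k-1,n-p-1},$$
which is the asserted recursion. I expect essentially all the content to be bookkeeping: the one point requiring care is the separation of the anti-diagonal into the bottom-row corner $Y_{k,n}$ and the off-row entries, together with the verification that the latter are untouched by differentials; everything else is a direct transcription of the $E^1$ ranks already recorded in Section~\ref{sec:homology}.
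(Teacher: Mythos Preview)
Your proof is correct and follows exactly the same approach as the paper's: sum the $E^2$ ranks along the anti-diagonal $p+q=n-k+1$, split off the bottom-row corner as $Y_{k,n}$, and note that the remaining entries equal their $E^1$ values with the ranks $\binom{n+1}{p+1}\b_{k-1,n-p-1}$ already computed. If anything, you spell out more carefully than the paper does why the off-row entries survive unchanged to $E^2$.
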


\begin{proof}
Since the spectral sequence $(E^r,d^r)$ collapses at $E^2$ and there is no 
torsion in the $E^2$-term, we have 
\begin{eqnarray*}
\b_{k,n}&=&\mathrm{rank}E^2_{n-k+1,0} + \sum_{p=0}^{n-k}\mathrm{rank}E^2_{p, n+1-k-p} \\
&=&Y_{k,n} + \sum_{p=0}^{n-k}\mathrm{rank}E^1_{p, n+1-k-p}.
\end{eqnarray*}
Since $\mathrm{rank}E^1_{p, n+1-k-p}= \binom{n+1}{p+1} \b_{k-1,n-p-1}$, 
the lemma holds.
\end{proof}


\bibliographystyle{amsplain}
\bibliography{refs}

\end{document}